\theoremstyle{plain}
\newtheorem{theorem}{Theorem}[section]
\newtheorem{corollary}[theorem]{Corollary}
\newtheorem{fact}[theorem]{Fact}
\newtheorem{definition}[theorem]{Definition}
\newtheorem{lemma}[theorem]{Lemma}
\theoremstyle{remark}
\newtheorem{remark}[theorem]{Remark}
\newtheorem{challenge}[theorem]{Challenge}
\newtheorem{example}[theorem]{Example}
\newcommand{\R}{\mathbb R}
\newcommand{\refT}[1]{Theorem~\ref{#1}}
\newcommand{\refC}[1]{Corollary~\ref{#1}}
\newcommand{\refL}[1]{Lemma~\ref{#1}}
\newcommand{\refS}[1]{Section~\ref{#1}}
\let\OLDthebibliography\thebibliography
\renewcommand\thebibliography[1]{
  \OLDthebibliography{#1}
  \setlength{\parskip}{0pt}
  \setlength{\itemsep}{0pt plus 0.3ex}
}
\title{Tight infinite matrices}
\author{Ron Aharoni\thanks{Faculty of Mathematics, Technion, Haifa~32000, Israel and MIPT. E-mail: {\tt ra@technion.ac.il}. Research supported by the Israel Science Foundation (ISF) grant no. 2023464 and the Discount Bank Chair at the Technion, and the European Union's Horizon 2020 research and innovation programme under the Marie Skldowska-Curie grant agreement no.\ 823748.}
~and He Guo\thanks{Faculty of Mathematics, Technion, Haifa~32000, Israel. E-mail: {\tt hguo@campus.technion.ac.il}.}}
\date{November 2022}
\begin{document}

\maketitle

\begin{abstract}
    We give a simple proof of a recent result of Gollin and Jo\'o \cite{gollinjoo}: if a possibly infinite system of homogeneous linear equations $A\vec{x}=\vec{0}$, where $A=(a_{i,j})$ is an $I \times J$ matrix, has only the trivial solution, then there exists an injection $\phi: J \to I$, such that $a_{\phi(j),j}\neq 0$ for all $j \in J$.
\end{abstract}

\section{Introduction and preliminaries}

Throughout the paper, $I$ and $J$ are  possibly infinite sets.
\begin{itemize}
\item
Let $FS(J)$ be the set of vectors in $\R^J$ having finite support, and $RFS(I,J)$  the set of $I \times J$ matrices whose rows belong to $FS(J)$. 
\item
The $i$th row of a matrix $M$ is denoted by 
$M_i$, and the $(i,j)$-entry sometimes by $M_{i,j}$ and sometimes (as is common) by $m_{i,j}$. 
\item
The set of rows of a matrix $M$ is denoted by $Row(M)$, and the set of columns by $Col(M)$.

\item For a possibly infinite set~$K$,
let $e_j$ be the $j$th vector in the standard basis of $FS(K)$: $e_j(k)=\delta_j^k$ for any $k\in K$.
\item The kernel $ker(M)$ of   a matrix
$M \in RFS(I,J)$ is $\{\vec{x} \in \R^J \mid M\vec{x}=\vec{0}$\}.
\end{itemize}
\begin{definition}
   A matrix $A \in RFS(I,J)$ is called {\em tight} if $ker(A) =\{\vec{0}\}$, i.e if the system of equations $A\vec{x}=\vec{0}$ has only the trivial solution. 
   \end{definition}
   Note that $A\vec{x}$ is a possibly infinite linear combination of the columns of~$A$. The condition $A \in RFS(I,J)$ implies that the product $A\vec{x}$ is well-defined. So, tightness is stronger than  linear independence of $Col(A)$.
In the finite case the two conditions coincide, and if they hold then $rank(A)=|J|$,  and hence $|J| \le |I|$.

\begin{example}\label{donjuan}
The  $\mathbb{Z}^+ \times \mathbb{Z}^+$ matrix $A=(a_{i,j})$ defined by $a_{i,1}=1$ and $a_{i,i+1}=-1$ for each~$i$, and $a_{i,j}=0 $ for all other values of $i,j$ is column-independent but not tight, since $A\vec{1}=\vec{0}$. 
\end{example}

The  inequality $|J| \le |I|$ is easily seen to be true also in the infinite case --- see \refC{cor:indlessthanspan} below. The theme pursued in the paper is a strengthening of this fact. 
A recurring theme in infinite combinatorics is replacing inequalities between sizes by injections satisfying a condition pertaining to the setting. A famous case is Erd\H{o}s'  conjecture (proved in \cite{ab}) on the infinite version of Menger's theorem: given any two sets $A,B$ of vertices in a digraph there exist a set $F$ of disjoint $A-B$ paths and an $A-B$ separating set $S$, such that $S$ consists of the choice of one vertex from each path in $F$. This implies $|S|=|F|$, but is more specific. In our case  the pertinent condition is summarized in:

\begin{definition}
An $I \times J$ matrix $A=(a_{i,j})$ is  {\em loaded} if there exists an injection $\phi: J \to I$ such that   that $a_{\phi(j),j}\neq 0$ for all $j\in J$.
\end{definition}

Gollin and Jo\'o \cite{gollinjoo} proved:

\begin{theorem}\label{thm:main}
A  tight matrix is loaded.  \end{theorem}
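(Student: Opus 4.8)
The plan is to prove that a tight matrix is loaded by finding the injection $\phi$ via a Hall-type / matching argument, suitably adapted to the infinite setting. The natural bipartite structure is: on one side the columns $J$, on the other the rows $I$, with $j$ adjacent to $i$ precisely when $a_{i,j}\neq 0$. A loading injection is exactly a matching saturating $J$ in this bipartite graph. So the goal reduces to verifying an appropriate infinite Hall condition and then invoking an infinite matching theorem (e.g. the Aharoni--Nash-Williams--Shelah criterion, or König-type duality for the defect version).

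First I would establish the finite Hall condition: for every finite set $S\subseteq J$ of columns, the set $N(S)$ of rows meeting some column in $S$ satisfies $|N(S)|\ge |S|$. This is where tightness does the work. Suppose some finite $S$ had $|N(S)|<|S|$. Consider the submatrix $A'$ whose columns are indexed by $S$ and whose rows are indexed by $N(S)$ (all other rows are identically zero on these columns). Then $A'$ is a finite matrix with strictly fewer rows than columns, so its columns are linearly dependent: there is a nonzero $\vec{y}\in\R^{S}$ with $A'\vec{y}=\vec{0}$. Extending $\vec{y}$ by zeros to a vector $\vec{x}\in\R^{J}$ of finite support, every row of $A$ outside $N(S)$ vanishes on the support of $\vec{x}$ by the definition of $N(S)$, so $A\vec{x}=\vec{0}$ with $\vec{x}\neq\vec{0}$, contradicting tightness. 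Thus the finite Hall condition holds for the bipartite graph, and moreover, the degree of each column $j$ is finite since the rows lie in $FS(J)$ is \emph{not} what we need here --- rather, what matters is that each row has finite support, so each column may have infinite degree and the graph need not be locally finite on the $J$ side.

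The main obstacle is that the finite Hall condition does \emph{not} suffice to produce a matching saturating $J$ when $J$ is infinite: the standard counterexamples (and indeed Example~\ref{donjuan}, which is tight-free) show that an infinite Hall condition is genuinely stronger. The correct tool is a theorem guaranteeing a matching of $J$ into $I$ under a structural hypothesis beyond the finite defect condition. I expect the right framework to be the theory of infinite matchings satisfying the Aharoni--Nash-Williams--Shelah condition, or alternatively a compactness-plus-augmenting-path argument: build a matching by transfinite recursion, and whenever a column is unmatched, use tightness to rule out the obstruction (an ``alternating barrier'') that would block augmentation. Concretely, I would try to show that if $M$ is a maximal matching leaving some $j_0\in J$ unsaturated, then the set of columns reachable from $j_0$ by alternating paths, together with their neighborhoods, yields a finite-support dependency violating $ker(A)=\{\vec{0}\}$; the delicate point is controlling alternating paths in a graph that is infinite on the column side, which is precisely where the finite-support hypothesis $A\in RFS(I,J)$ must be exploited to keep the relevant linear combinations finitely supported.

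An alternative, possibly cleaner route is induction or exhaustion on $J$ combined with the finite-dimensional rank argument: well-order $J$ and attempt to assign rows greedily, but this founders on the same infinite obstruction, so I anticipate that some genuine infinite-combinatorial input (a matching theorem or a maximality/Zorn argument tuned to linear dependence rather than to graph adjacency) is unavoidable. The cleanest formulation may bypass graphs entirely: set up a Zorn's-lemma argument on the poset of partial loading injections ordered by extension, and at a maximal element use tightness directly to derive a contradiction if any column remains unassigned. I expect the heart of the proof to be exactly this final contradiction step, where one must convert ``no available nonzero row'' into a nontrivial kernel vector of finite support.
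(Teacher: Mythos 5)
There is a genuine gap: your proposal is a framing plus a list of candidate tools, but the step that actually carries the weight of \refT{thm:main} is never executed. What you do prove correctly is the \emph{finite} Hall condition for the bipartite graph $G_A$ (the submatrix argument giving a finite-support kernel vector when $|N(S)|<|S|$ is fine). But, as you yourself note, the finite Hall condition does not yield a $J$-saturating matching when $J$ is infinite; one needs the absence of an infinite obstruction in the sense of Podewski--Steffens or Nash-Williams--Wojciechowski, and deriving \emph{that} from tightness is the entire content of the theorem. Worse, the specific contradiction you propose for the final step --- ``at a maximal partial injection, an unassigned column with no available nonzero row yields a nontrivial finite-support kernel vector'' --- is false as stated. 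The paper's last example (the system $x_{2k-1}+x_{2k}+x_{2k+1}=0$, $x_{2k}+x_{2k+1}=0$) is tight, yet it admits a wave $F$ together with an uncovered column $x_1$ all of whose neighbors are used by $F$ (an \emph{impediment}); no contradiction with tightness arises because the wave is not \emph{critical}. So converting ``no available row'' into a kernel vector necessarily requires invoking the no-infinite-alternating-path (criticality) condition, and the paper explicitly poses the deduction ``PS-obstruction $\Rightarrow$ not tight'' as an open Challenge rather than a routine step. In short, your plan reconstructs the outline of the Gollin--Jo\'o route through infinite marriage theorems, but leaves its one hard step unproved and sketches a shortcut for it that a concrete tight system refutes.

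For contrast, the paper's proof avoids matching theory altogether. Using the Cowen--Emerson compactness theorem for linear systems, tightness is shown to be equivalent to $e_j\in span(Row(A))$ for every $j$ (\refL{lemma:stubborn}). One then defines $\phi$ greedily: write $e_1$ as a finite combination of rows, pick a row $i_1$ appearing with nonzero coefficient (so $a_{i_1,1}\neq 0$), set $\phi(1)=i_1$, and replace that row by $e_1$; this preserves the row span, and at step $j+1$ the representation of $e_{j+1}$ in the modified matrix must use a row outside $\{\phi(1),\dots,\phi(j)\}$ with a nonzero entry in column $j+1$, because the already-replaced rows are standard basis vectors. The uncountable case reduces to the countable one by using $A\in RFS(I,J)$ to split the system into blocks. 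If you want to salvage your approach, the missing lemma you must supply is precisely: a critical wave plus an excluded column forces a nontrivial finite-support element of $ker(A)$.
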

The matrix in example \ref{donjuan} is column-independent,  and not loaded. So, the tightness condition is indeed necessary, column-independence does not suffice. 

The finite case of  Theorem \ref{thm:main} is a special case of a well-known property of matroids:

\begin{fact}\label{matroids}
Let $\mathcal M$ be a matroid with finite rank. Let $B$ be a base of $\mathcal{M}$
and $H$ an independent set. Then there exists an injection $f:H \to B$ such that $f(h)$ belongs to the minimal subset of $B$ spanning $h$ for each $h\in H$. 
\end{fact}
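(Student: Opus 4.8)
The plan is to recast the desired map $f$ as a \emph{system of distinct representatives} (SDR) and to verify Hall's condition through the matroid rank function. First I would record the finiteness we need: since $\mathcal M$ has finite rank $r$, the base $B$ satisfies $|B|=r$, and since $H$ is independent we have $|H|\le r$; hence both $B$ and $H$ are finite. For each $h\in H$ let $S_h\subseteq B$ denote the (unique) minimal subset of $B$ spanning $h$ --- concretely, $S_h=\{h\}$ if $h\in B$, and otherwise $S_h=C\setminus\{h\}$ where $C$ is the fundamental circuit of $h$ with respect to $B$. An injection $f:H\to B$ with $f(h)\in S_h$ for all $h$ is then exactly an SDR for the finite family $(S_h)_{h\in H}$, so it suffices to produce such an SDR.

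Next I would verify Hall's condition. Fix $H'\subseteq H$ and set $U=\bigcup_{h\in H'}S_h\subseteq B$. Since $B$ is a base, every subset of $B$ is independent, so $|U|=\mathrm{rank}(U)$. On the other hand each $S_h$ spans $h$, so $U$ spans every element of $H'$; thus $H'$ lies in the closure of $U$ and $\mathrm{rank}(H')\le \mathrm{rank}(U)$. Finally $H'$ is independent, being a subset of the independent set $H$, so $\mathrm{rank}(H')=|H'|$. Combining these gives $|U|=\mathrm{rank}(U)\ge \mathrm{rank}(H')=|H'|$, which is precisely Hall's condition for $(S_h)_{h\in H'}$. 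Applying Hall's marriage theorem in the finite setting just established yields the required SDR, and hence the injection $f$.

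The argument is short once framed this way, so the content lies in the framing rather than in any single hard step. The point I would be most careful about is the inequality $\mathrm{rank}(H')\le\mathrm{rank}(U)$: minimality of each $S_h$ only shrinks the individual sets, so one must confirm that spanning of $H'$ survives after taking the union --- it does, since enlarging a spanning set preserves spanning. The only auxiliary fact to confirm separately is that the minimal spanning subset $S_h$ is well defined and unique, which follows from the fundamental-circuit description above. No genuine difficulty arises; this is a standard Rado-type transversal argument specialized to the base $B$.
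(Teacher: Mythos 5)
Your proposal is correct and is essentially the paper's own argument: the paper also defines $U_T=\bigcup_{t\in T}S_t$ and invokes Hall's theorem via the inequality $|U_T|\ge |T|$, which you justify in the same way (through the rank function) that the paper leaves implicit. No differences worth noting beyond your added detail on uniqueness of the minimal spanning subsets and the rank computation.
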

One way to prove it uses Hall's theorem. For every subset $T$ of $H$ let $U_T$ be
the union, over $t \in T$, of the minimal  subsets of $B$ spanning $t$. Then, since $T \in {\mathcal M}$, we have  $|U_T| \ge |T|$. 

To derive the finite case of \refT{thm:main}, take the ground set of $\mathcal{M}$ to be  $\mathbb{R}^I$ and the independent sets to be the sets of independent vectors in~$\mathbb{R}^I$, $B$ to be $\{e_i\in \mathbb{R}^I \mid i\in I\}$
and $H$ to be $Col(A)$. 
\begin{remark}
    The infinite case does not follow this way, because  $B$ is a base for $FS(I)$,  not for $\R^I$, the habitat of $Col(A)$.
\end{remark}
Another proof of the finite case of \refT{thm:main} uses the fact that $rank(A)$ is the largest $k$ for which there exists a $k \times k$ submatrix with  non-zero determinant.

The   proof in \cite{gollinjoo} for the infinite case uses, in a quite ingenious way, a criterion for matchability (a ``marriage theorem'') by Wojciechowski \cite{w}, a refined version of a theorem of Nash-Williams. We shall later  return to this approach.   The main aim of this paper is to provide a direct, simpler proof. On the way we shall also mark some  facts about infinite linear spaces.

We shall use a compactness theorem of Cowen and Emerson~\cite{CowenEmerson}.
\begin{theorem}\label{thm:compactness}
    If every finite subset of a set $T$ of linear equations is solvable, then
so is $T$.
\end{theorem}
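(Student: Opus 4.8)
The plan is to reduce the solvability of $T$ to an algebraic \emph{consistency} condition that is manifestly finitary, and then to realize a solution by linear duality. Throughout, index the variables by a set $J$ and record each equation as a pair $(a,b)$ with $a \in FS(J)$ and $b \in \R$, standing for $\sum_{j} a_j x_j = b$; since $a$ has finite support this sum, and hence the notion of a solution $x \in \R^J$, is well defined. Call $T$ \emph{consistent} if whenever finitely many equations $(a_k,b_k) \in T$ and scalars $\lambda_k$ satisfy $\sum_k \lambda_k a_k = 0$ in $FS(J)$, one also has $\sum_k \lambda_k b_k = 0$.

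First I would dispatch the easy half: if every finite subset of $T$ is solvable then $T$ is consistent. Indeed, any failure of consistency is witnessed by finitely many equations $(a_k,b_k)$ and scalars $\lambda_k$ with $\sum_k \lambda_k a_k = 0$; these equations form a finite subset of $T$, which by hypothesis has a solution $x$, and then $\sum_k \lambda_k b_k = \sum_k \lambda_k \langle a_k, x\rangle = \langle \sum_k \lambda_k a_k,\, x\rangle = 0$. So no violation can occur.

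The core of the argument is the converse: a consistent system is solvable. Let $W = \mathrm{span}\{a : (a,b) \in T\} \subseteq FS(J)$. Consistency says precisely that the assignment $a_k \mapsto b_k$ respects all linear relations among the $a_k$, so it extends to a well-defined linear functional $f: W \to \R$ (in particular, if one vector $a$ carries two right-hand sides $b,b'$, taking $\lambda = 1,-1$ forces $b=b'$). Extend $f$ to a linear functional $g$ on all of $FS(J)$ by choosing a complement of $W$, and set $x_j := g(e_j)$, defining $x = (x_j)_{j} \in \R^J$. Because every $v \in FS(J)$ is a \emph{finite} combination $v = \sum_j v_j e_j$, linearity gives $g(v) = \sum_j v_j x_j = \langle v, x\rangle$; applied to each $a$ with $(a,b) \in T$ this yields $\langle a, x\rangle = f(a) = b$, so $x$ solves $T$. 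Chaining the two halves proves the theorem.

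I expect the extension step in the last paragraph to be the main obstacle, and it is where the argument leaves the finitary realm: extending a functional from $W$ to $FS(J)$ uses a Hamel basis, i.e.\ the axiom of choice. Some such ingredient seems unavoidable, since a purely topological proof is unavailable --- the solution sets of finite subsystems are closed affine subspaces of $\R^J$ enjoying the finite intersection property, but $\R^J$ is not compact, so one cannot conclude that their total intersection is nonempty without exploiting the linear structure. The finite-support hypothesis $a \in FS(J)$ is used crucially in the identification $g(v) = \langle v, x\rangle$, and is exactly what the ambient setting $RFS(I,J)$ guarantees.
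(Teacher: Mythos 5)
Your proof is correct. Note that the paper does not actually prove this statement --- it is quoted as a black box from Cowen and Emerson --- so there is no in-paper argument to compare against; what you have supplied is a self-contained proof of the cited result. Both halves of your argument check out: finite solvability rules out any finite linear relation $\sum_k \lambda_k a_k = 0$ with $\sum_k \lambda_k b_k \neq 0$, and in the absence of such a relation the assignment $a_k \mapsto b_k$ is a well-defined linear functional on $W = \mathrm{span}\{a : (a,b)\in T\}$, which a Hamel-basis extension turns into a genuine solution $x_j = g(e_j)$; the finite-support hypothesis is exactly what lets you identify $g$ with $\langle\,\cdot\,,x\rangle$ on $FS(J)$. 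It is worth observing that your ``consistency'' certificate is precisely the object the paper extracts when it \emph{applies} the theorem: in the proof of Lemma~\ref{lemma:stubborn}, Gauss elimination on a non-solvable finite subsystem produces coefficients with $\sum_{i}\lambda_i(A_i|0)+\lambda(e_j|1)=(0,\dots,0|1)$, which is exactly a violation of your consistency condition for the augmented system. So your proof dovetails cleanly with the way the theorem is used downstream. Your closing remark is also fair: the solution sets of finite subsystems are closed affine subspaces of $\R^J$ with the finite intersection property, but $\R^J$ is not compact in the product topology, so some use of the linear structure (and of choice, via the basis extension) is genuinely needed.
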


\section{Conditions  equivalent to tightness }

Call a variable $x_j$ \emph{stubborn} if there is no solution of the linear system $A\vec{x}=\vec{0}$ in which $x_j=1$.
 Tightness is equivalent to all variables being stubborn.

\begin{lemma}\label{lemma:stubborn}
 A variable $x_j$ is stubborn if and only if $e_j \in span(Row(A))$ for $e_j\in \mathbb{R}^J$. 
\end{lemma}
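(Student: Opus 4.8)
The plan is to prove the two implications separately, after first reformulating stubbornness in linear-algebraic terms. Since $ker(A)$ is a linear subspace of $\R^J$, if some solution $\vec{x}$ of $A\vec{x}=\vec{0}$ had $x_j=c\neq 0$, then $\tfrac1c\vec{x}\in ker(A)$ would have $j$th coordinate $1$; hence $x_j$ is stubborn if and only if $x_j=0$ for \emph{every} $\vec{x}\in ker(A)$. Writing $\langle f,\vec{x}\rangle=\sum_k f(k)x(k)$ for the pairing of $f\in FS(J)$ with $\vec{x}\in\R^J$ (a finite sum, since $f$ has finite support), stubbornness of $x_j$ says precisely that the basis vector $e_j\in FS(J)$ annihilates $ker(A)$.

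For the easy direction, I would suppose $e_j\in span(Row(A))$, so $e_j=\sum_{i\in F}c_iA_i$ for some finite $F\subseteq I$. For any $\vec{x}\in ker(A)$ we have $\langle A_i,\vec{x}\rangle=0$ for all $i$, so $x_j=\langle e_j,\vec{x}\rangle=\sum_{i\in F}c_i\langle A_i,\vec{x}\rangle=0$. By the reformulation, $x_j$ is stubborn.

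For the converse I would argue by contraposition: assuming $e_j\notin V:=span(Row(A))$, I would construct a solution of $A\vec{x}=\vec{0}$ with $x_j=1$. The crucial point is that $\R^J$ is the \emph{full} algebraic dual of $FS(J)$: any linear functional on $FS(J)$ is determined by its (arbitrary) values on the basis $\{e_k\}$ and so is realized as pairing with a unique vector of $\R^J$. Since $e_j\notin V$, the vector $e_j$ together with a Hamel basis of $V$ is linearly independent; I would extend this to a Hamel basis of $FS(J)$ and define a functional $\vec{x}$ taking value $1$ on $e_j$ and $0$ on every other basis vector (in particular $0$ on all of $V$). Regarded as an element of $\R^J$, this $\vec{x}$ vanishes on every row $A_i\in V$, i.e. $A\vec{x}=\vec{0}$, while $x_j=\langle e_j,\vec{x}\rangle=1$, contradicting stubbornness.

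The heart of the argument, and the step I expect to require the most care, is this converse: it is exactly the assertion that $span(Row(A))$ is the double annihilator of itself, i.e. that a subspace of $FS(J)$ is recovered by annihilating and re-annihilating across the pairing with $\R^J$. What makes this hold in the infinite case — contrary to the usual caution about infinite-dimensional duality — is precisely that $\R^J$ is the full dual of $FS(J)$, so the separating functional produced by the basis extension genuinely lives in the solution space $\R^J$. I would emphasize that no topology or compactness is needed here; a Zorn's-lemma/Hamel-basis extension suffices.
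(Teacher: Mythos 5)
Your proof is correct, but it takes a genuinely different route from the paper's. The paper proves the ``only if'' direction by adjoining the equation $x_j=1$ to the system, invoking the Cowen--Emerson compactness theorem to extract a finite non-solvable subsystem, and then running Gaussian elimination on that finite system to produce the row $(0,\dots,0\,|\,1)$, which yields $e_j$ as a finite linear combination of rows. You instead argue by contraposition through annihilator duality: you observe that $\R^J$ is the full algebraic dual of $FS(J)$, so if $e_j\notin span(Row(A))$ you can extend a basis of $span(Row(A))$ together with $e_j$ to a Hamel basis of $FS(J)$ and read off a functional --- i.e.\ a vector of $\R^J$ --- killing every row but taking the value $1$ at $e_j$; this is precisely a non-trivial solution witnessing that $x_j$ is not stubborn. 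All the steps check out: the reduction of stubbornness to ``$x_j=0$ on $ker(A)$'' uses only that $ker(A)$ is a subspace, the pairing is well defined because rows have finite support, and the linear independence of $\{e_j\}\cup B_V$ when $e_j\notin V$ is immediate. What your approach buys is self-containedness and a conceptual explanation of why the lemma holds despite the usual failure of double-annihilator identities in infinite dimensions: the solution space $\R^J$ is the \emph{entire} dual of $FS(J)$, so the separating functional always exists. What the paper's approach buys is brevity modulo the cited compactness theorem, and a formulation that stays within the ``solvability of equation systems'' language used throughout; both arguments rely on the axiom of choice in some form (Zorn's lemma for your basis extension, the compactness theorem for theirs).
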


Here, as usual, for a set~$S$ of vectors in a linear space, $span(S)$ is the set of finite linear combinations of  vectors in $S$.
\begin{proof}
The ``if'' part is clear.
To prove the ``only if'' direction, assume $x_j$ is stubborn. Add to the system of equations the equation $x_j=1$. By assumption the new system of equations is non-solvable, and by \refT{thm:compactness} it has a non-solvable finite subsystem $I'$.     Gauss elimination produces then a row $(0,0,0,\dots|1)$, where as usual the ``$|$'' separates the row of $A$ from the right-hand side of the equation. This implies the existence of coefficients $\lambda_i,~i \in I' $ such that  $\sum_{i \in I'}\lambda_i(A_i|0)+\lambda (e_j |1)=(0,0,0,\ldots |1)$ (whence $\lambda=1$). Then  $e_j =-\sum_{i\in I'}\lambda_iA_i$. 
 \end{proof}

\begin{theorem}
       Let $A \in RFS(I,J)$.  The following are equivalent: 

    \begin{enumerate}[(1)]
        \item $A$ is tight.\label{eq:tight}
        \item $span(Row(A))=FS(J)$. \label{eq:span=FS}
        \item $A$ is left-invertible, i.e., there exists an $J \times I$ matrix $Z \in RFS(J,I)$  such that $(ZA)_{\ell,k}=\delta^\ell_k$ ($Z \in RFS(J,I)$ implies that $ZA$ is well-defined). \label{eq:leftI}
    \end{enumerate}
\end{theorem}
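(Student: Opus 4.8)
The plan is to establish the two equivalences between properties~\ref{eq:tight} and~\ref{eq:span=FS}, and between properties~\ref{eq:span=FS} and~\ref{eq:leftI}. The first rests on the stubborn machinery already developed, while the second is merely a repackaging of spanning coefficients into a matrix. The heavy lifting — and the only place compactness enters — sits inside \refL{lemma:stubborn}; once that is invoked, everything else is bookkeeping about finite supports.

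For property~\ref{eq:tight} $\Leftrightarrow$ property~\ref{eq:span=FS}, I would argue as follows. Tightness is, by definition, the assertion that every variable $x_j$ is stubborn, and by \refL{lemma:stubborn} the variable $x_j$ is stubborn exactly when $e_j\in span(Row(A))$. Hence $A$ is tight if and only if $e_j\in span(Row(A))$ for every $j\in J$, which is the same as $FS(J)\subseteq span(Row(A))$ since the $e_j$ form a basis of $FS(J)$. The reverse inclusion $span(Row(A))\subseteq FS(J)$ is automatic: $A\in RFS(I,J)$ means each row $A_i$ lies in $FS(J)$, and finite linear combinations of finitely supported vectors are finitely supported. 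Together these give $span(Row(A))=FS(J)$.

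For property~\ref{eq:span=FS} $\Rightarrow$ property~\ref{eq:leftI}, I would, for each $\ell\in J$, use the spanning identity to write $e_\ell$ as a \emph{finite} combination $e_\ell=\sum_{i}z_{\ell,i}A_i$ of rows of $A$, and then collect these coefficients into one matrix $Z=(z_{\ell,i})$. Since each combination is finite, the $\ell$th row of $Z$ has finite support, so $Z\in RFS(J,I)$ and $ZA$ is well-defined; reading off the $k$th coordinate yields $(ZA)_{\ell,k}=\sum_i z_{\ell,i}a_{i,k}=e_\ell(k)=\delta^\ell_k$. Conversely, for property~\ref{eq:leftI} $\Rightarrow$ property~\ref{eq:span=FS}, the $\ell$th row $Z_\ell$ of a left inverse has finite support, so $\sum_i z_{\ell,i}A_i$ is a genuine finite combination of rows, and the identity $(ZA)_{\ell,k}=\delta^\ell_k$ says exactly that this combination equals $e_\ell$. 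Thus every $e_\ell\in span(Row(A))$, and we conclude as before.

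The step that looks most delicate, and which I would deliberately route around, is a direct proof of property~\ref{eq:leftI} $\Rightarrow$ property~\ref{eq:tight}: there one is tempted to apply $Z$ to $A\vec{x}=\vec{0}$ and conclude $\vec{x}=(ZA)\vec{x}=Z(A\vec{x})=\vec{0}$, but the middle step is an associativity claim $(ZA)\vec{x}=Z(A\vec{x})$ for a possibly infinitely supported $\vec{x}$, which is not automatic for infinite sums and would require separate justification. Passing through property~\ref{eq:span=FS} avoids this completely, since there every manipulation involves only finitely many rows. I therefore expect no genuine obstacle beyond carefully checking that the coefficient matrix $Z$ really lands in $RFS(J,I)$ — which is forced by the finiteness built into the definition of $span$.
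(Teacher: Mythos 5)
Your proposal is correct, and its substance coincides with the paper's: the equivalence of \eqref{eq:tight} and \eqref{eq:span=FS} via \refL{lemma:stubborn}, and the construction of $Z$ from the spanning coefficients for \eqref{eq:span=FS} $\Rightarrow$ \eqref{eq:leftI}, are exactly the paper's steps. The only divergence is how the third condition is tied back in. The paper closes a cycle by proving \eqref{eq:leftI} $\Rightarrow$ \eqref{eq:tight} directly: given $A\vec{x}=\vec{0}$ with $x_j\neq 0$, apply $Z$ and read off $x_j=(ZA)_j\vec{x}=0$. You instead prove \eqref{eq:leftI} $\Rightarrow$ \eqref{eq:span=FS} by unpacking the rows of $Z$ as spanning coefficients, and get \eqref{eq:span=FS} $\Rightarrow$ \eqref{eq:tight} from the ``if'' direction of \refL{lemma:stubborn}; both routes are one-liners. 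The associativity $(ZA)\vec{x}=Z(A\vec{x})$ that you flag and route around is legitimate to worry about in general, but it is harmless here: since $Z\in RFS(J,I)$ and $A\in RFS(I,J)$, the double sum $\sum_i\sum_k z_{\ell,i}a_{i,k}x_k$ has only finitely many nonzero terms (finitely many $i$ in the support of $Z_\ell$, and for each such $i$ finitely many $k$ in the support of $A_i$), so the two orders of summation trivially agree. Your reorganization buys a proof in which that observation is never needed, at the cost of invoking both directions of \refL{lemma:stubborn} rather than only the ``only if'' direction; either bookkeeping is perfectly acceptable.
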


\begin{proof} 
   \eqref{eq:tight} $\Rightarrow$ \eqref{eq:span=FS}: 
Since $A\in RFS(I,J)$,  $span(Row(A))\subseteq FS(J)$. By~\refL{lemma:stubborn}, if $A$ is tight, then $e_j \in span(Row(A))$ for every $e_j\in\mathbb{R}^J$ with $j \in J$, proving $FS(J)\subseteq span(Row(A))$.  

\eqref{eq:span=FS} $\Rightarrow$ \eqref{eq:leftI}: By~\refL{lemma:stubborn}, for each $j\in J$ we have $e_j\in FS(J)=span(Row(A))$, therefore there exist finitely many non-zero coefficients $\lambda_{j,i_1},\dots, \lambda_{j, i_{t(j)}}$ such that 
$\lambda_{j, i_1}A_{i_1}+\dots+ \lambda_{j, i_{t(j)}}A_{i_{t(j)}}=e_j$.
The desired matrix $B$ is defined by 
$B_{j, p}= \lambda_{j,p}$ for each  $j\in J$ and $ p=i_1,\dots, i_{t(j)}$, and all other entries being $0$.

\eqref{eq:leftI} $\Rightarrow$ \eqref{eq:tight}: 
Assume $(ZA)_{\ell,k}=\delta^\ell_k$.
Suppose $\vec{x}$ is a non-trivial solution to $A\vec{x}=\vec{0}$ with $x_j\neq 0$. Then  $ZA\vec{x}=\vec{0}$, implying $(ZA)_j\vec{x}=\vec{0}$ so that $x_j=0$, a contradiction.
\end{proof}

\begin{corollary}\label{cor:indlessthanspan}
If $A\in RFS(I,J)$ is tight then $|I| \ge |J|$.
\end{corollary}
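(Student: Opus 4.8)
The plan is to read the bound off from condition~\eqref{eq:span=FS} of the preceding theorem, turning the inequality $|I|\ge|J|$ into the statement that a spanning set cannot be smaller than the dimension of the space it spans. By that theorem, tightness of $A$ is equivalent to $span(Row(A))=FS(J)$; so the set $Row(A)$, which lies in $FS(J)$ because $A\in RFS(I,J)$, is a spanning set of the vector space $FS(J)$.

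First I would pin down the two relevant cardinalities. On one side, the standard basis $\{e_j : j\in J\}$ is a basis of $FS(J)$ --- every finitely supported vector is a unique finite combination of the $e_j$ --- so $\dim FS(J)=|J|$. On the other side, the map $i\mapsto A_i$ is a surjection from $I$ onto $Row(A)$, whence $|Row(A)|\le|I|$.

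The core of the argument is the linear-algebraic fact that every spanning set of a vector space has cardinality at least the dimension of that space. Granting it, $|Row(A)|\ge\dim FS(J)=|J|$, and combining with $|Row(A)|\le|I|$ gives $|I|\ge|J|$. The one nontrivial point --- and the main obstacle --- is the infinite-dimensional instance of this fact: that a spanning set cannot be smaller than a basis when $J$ is infinite. This is not a finite exchange argument but rests on the invariance of basis cardinality for infinite-dimensional spaces (all bases of a vector space are equinumerous, via Zorn's lemma): a spanning set can be thinned to a basis, and that basis has cardinality $\dim FS(J)=|J|$. Everything else is bookkeeping.

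A dual route uses condition~\eqref{eq:leftI} instead, and relies on the same fact in its ``independent'' form. The left inverse $Z\in RFS(J,I)$ has rows in $FS(I)$, and from $(ZA)_{j,k}=\delta^j_k$ one checks that $\{Z_j : j\in J\}$ is linearly independent: a nontrivial finite combination would satisfy $(\sum_j c_j Z_j)A=\sum_j c_j e_j\neq\vec{0}$, and hence cannot itself be $\vec{0}$. Thus $|J|$ is the size of a linearly independent family in $FS(I)$, so $|J|\le\dim FS(I)=|I|$. Either way the crux is the same infinite-dimensional cardinality comparison between a spanning (resp. independent) family and a basis.
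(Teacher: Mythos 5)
Your proposal is correct and follows essentially the same route as the paper: pass to condition~\eqref{eq:span=FS}, then compare the independent family $\{e_j : j\in J\}$ against the spanning set $Row(A)$ inside $FS(J)$. The only difference is that the paper proves the required cardinality comparison self-containedly (\refL{lemma: LleS}, via a finite-to-one map from the independent set into the finite subsets of the spanning set) rather than citing the invariance of basis cardinality, and your dual argument via the left inverse is a valid but unneeded variant of the same comparison.
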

We need the following basic fact, whose proof is given for completeness. 
\begin{lemma}\label{lemma: LleS}
    If $S,L \subseteq FS(J)$, $span(S)=FS(J)$ and $L$ is linearly independent then $|L|\le |S|$.
\end{lemma}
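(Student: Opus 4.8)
The plan is to reduce the infinite statement to the classical finite-dimensional one via the notion of the \emph{support} of a vector relative to the spanning set $S$. First I would dispose of the case in which $S$ is finite: then $FS(J)=span(S)$ is spanned by finitely many vectors, so by the classical (finite) exchange lemma any linearly independent subset of $FS(J)$---in particular $L$---has at most $|S|$ elements.

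Assume now that $S$ is infinite. Since $span(S)=FS(J)$, each $\ell\in L$ can be written as a finite linear combination of members of $S$; I would fix one such expression and let $S_\ell\subseteq S$ be its (finite) support, thereby defining a map $\ell\mapsto S_\ell$ from $L$ into the collection of finite subsets of $S$. The crux is to show this map is boundedly-to-one: for a fixed finite $T\subseteq S$, every $\ell\in L$ with $S_\ell\subseteq T$ satisfies $\ell\in span(T)$, a space of dimension at most $|T|$, so by the finite case applied inside $span(T)$ at most $|T|$ members of the linearly independent set $L$ can satisfy $S_\ell\subseteq T$. In particular each fiber $\{\ell\in L: S_\ell=T\}$ is finite. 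Note that this argument does not require $S$ itself to be independent, since the containment $span(S_\ell)\subseteq span(T)$ holds regardless of how the representation of $\ell$ was chosen.

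To finish, I would write $L$ as the disjoint union, over the finite subsets $T\subseteq S$, of these finite fibers (disjoint because each $\ell$ is assigned a single support $S_\ell$). Since an infinite set $S$ has exactly $|S|$ finite subsets, this yields
\[
|L|\;\le\;|S|\cdot\aleph_0\;=\;|S|.
\]
I expect the middle step---using the finite exchange lemma locally to cap the number of independent vectors sharing a bounded support---to be the main obstacle; once it is in place the infinite cardinal arithmetic (which quietly invokes the axiom of choice) is routine.
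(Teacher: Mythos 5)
Your proposal is correct and follows essentially the same route as the paper: handle the finite case by the classical exchange lemma, assign to each $\ell\in L$ a finite subset of $S$ spanning it, use the finite case to see that this assignment is finite-to-one, and conclude by the cardinal identity $|S|\cdot\aleph_0=|\binom{S}{<\infty}|=|S|$. The only cosmetic difference is that you bound the fibers by arguing over all $T$ containing a given support rather than over exact supports, which is a harmless strengthening.
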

\begin{proof}
    If $S$ is finite, then this is a classical theorem on finite vector spaces. If not, then for every $v \in L$ let $\Phi(v)$ be a finite subset of $S$ spanning $v$. By the finite case, $|\Phi^{-1}(T)| \le |T|$ for every finite subset $T$ of $S$. Thus $\Phi$ is a finite-to-one function from  $L$ to $\binom{S}{<\infty}$, the set of finite subsets of $S$, implying $|L| \le |\omega|\cdot|\binom{S}{<\infty}|=|S|$.
\end{proof}

\begin{proof}[Proof of \refC{cor:indlessthanspan}]
By \eqref{eq:span=FS},  $Row(A)$ spans $FS(J)$.
Apply~\refL{lemma: LleS} to the space $FS(J)$ with  $L=\{e_j\in \mathbb{R}^J \mid j\in J \}$ and  $S=Row(A)$. 
\end{proof}

 An $I \times J$ matrix $A$, where $J \subseteq I$, is said to be {\em diagonal} if $a_{i,j}=0$ whenever $i \neq j$. A diagonal matrix is said to be {\em proudly diagonal}  if it is diagonal and $a_{j,j}\neq 0$ for all $j \in J$.

An {\em elementary row operation} on a matrix $A$ is either exchanging two rows;  or replacing a row $A_i$ by a linear combination of finitely many rows $A_k$, in which $A_i$ appears with non-zero coefficient.

In the remaining part of this section, we assume that $I=\mathbb{Z}^+$. 
A matrix $A \in RFS(I,J)$ is called \emph{reducible} to a matrix $B \in RFS(I,J)$ if there is a sequence of matrices $B^{(0)}=A,B^{(1)},B^{(2)},\dots \in RFS(I,J)$ such that for each $k\in I$, $B^{(k)}$ has the same first $k$ rows as $B$, and $B^{(k)}$ is obtained from $B^{(k-1)}$ by finitely many elementary row operations.

 A matrix~$A$ is {\em proudly row-diagonalizable} if $A$ is reducible to  
a proudly diagonal matrix.
 In the countable case there is another condition  equivalent to tightness.

\begin{theorem}\label{thm:tightiffprd}
    Let $A \in RFS(I,J)$, where $I,J$ are countable and  $J\subseteq I$. Then $A$ is tight if and only if $A$ is  proudly row-diagonalizable.
\end{theorem}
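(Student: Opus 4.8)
The plan is to route both directions through the characterization (proved in the equivalent-conditions theorem) that tightness is the same as $span(Row(A))=FS(J)$, and to read ``reducible'' as an infinite, carefully staged Gauss--Jordan elimination in which row $m$ is frozen at stage $m$. For the easy direction, ``proudly row-diagonalizable $\Rightarrow$ tight'', I would first record that a single elementary row operation preserves $span(Row(\cdot))$: a swap obviously does, and replacing $A_i$ by a finite combination $\sum_k\mu_k A_k$ with $\mu_i\neq 0$ is invertible, so it cannot change the spanned subspace of $FS(J)$. Consequently every matrix $B^{(k)}$ in the reduction has $span(Row(B^{(k)}))=span(Row(A))$. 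Fixing $j\in J$ and taking $k\ge j$, the $j$th row of $B^{(k)}$ equals the $j$th row of the proudly diagonal limit $B$, namely $b_{j,j}e_j$ with $b_{j,j}\neq 0$; hence $e_j\in span(Row(A))$ for all $j\in J$, so $span(Row(A))=FS(J)$ and $A$ is tight.

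For the converse I would construct the sequence $B^{(0)}=A,B^{(1)},\dots$ by induction, maintaining the invariant that after stage $m-1$ the first $m-1$ rows already coincide with the target (row $j$ is $e_j$ for $j\in J\cap\{1,\dots,m-1\}$ and the zero row for $j\notin J$, $j<m$), while $span$ of all current rows is still $FS(J)$. The core case is stage $m$ with $m\in J$. Let $\pi_m$ be the projection of $FS(J)$ that kills the already-pivoted coordinates $J\cap\{1,\dots,m-1\}$. Applying $\pi_m$ to $span(Row)=FS(J)$ and noting that the finalized pivots map to $0$, the invariant forces the projected tails $\{\pi_m(R_i):i\ge m\}$ to span $FS(\{j\in J:j\ge m\})$; in particular $e_m=\sum_{i\ge m}\gamma_i\pi_m(R_i)$ for some finite family of coefficients with $\gamma_{i^*}\neq 0$ for some $i^*\ge m$. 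Then $\sum_{i\ge m}\gamma_i R_i=e_m+u$ with $u\in FS(J\cap\{1,\dots,m-1\})$, and $u$ is a finite combination of the finalized pivot rows. After swapping row $i^*$ into position $m$, replacing row $m$ by $\sum_{i\ge m}\gamma_i R_i$ minus that combination yields exactly $e_m$; since the coefficient on the new row $m$ is $\gamma_{i^*}\neq 0$ this is a legal elementary operation, it leaves rows $1,\dots,m-1$ untouched, and it reinstates the invariant for $m+1$.

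The main obstacle is stage $m$ with $m\notin J$, where the frozen target is the \emph{zero} row. To annihilate row $m$ in finitely many operations I would need a current row at some position $\ge m$ lying in the span of the remaining rows, which could then be swapped into position $m$ and killed by a single replacement; equivalently, the tail must carry enough redundancy. This is the delicate point, because elementary operations preserve linear independence, so the existence of such a dependence cannot come for free and must be extracted from tightness together with the finite-support structure of the rows (and the interplay between $|I|$ and $|J|$). Establishing that this redundancy is always available, and that the resulting infinite schedule genuinely stabilizes each row after finitely many operations, is where I expect the real work to concentrate; once both stages are justified, the limit matrix $B$ is proudly diagonal and $A$ is reducible to it by construction.
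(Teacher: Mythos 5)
Your ``if'' direction is complete and correct, and takes a mildly different route from the paper's: you show that elementary row operations preserve $span(Row(\cdot))$, harvest each $e_j$ from the frozen $j$th row of $B^{(k)}$ for $k\ge j$, and invoke the equivalence of tightness with $span(Row(A))=FS(J)$; the paper instead pushes a putative nonzero kernel vector $\vec{x}$ through the finitely many operations producing $B^{(j)}$ and reads off $x_j=0$ from the row $b_{j,j}e_j$. Both work. Your treatment of the pivot stages $m\in J$ in the converse is essentially the paper's argument: use tightness and the preserved span to write $e_m$ as a finite combination of current rows with a nonzero coefficient on some row of index $\ge m$, then swap that row into position $m$ and replace it by the combination.

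The obstacle you isolate at stages $m\in I\setminus J$ is a genuine gap in your proof, but you are right to be suspicious: the required redundancy is \emph{not} always available, and your own observation that elementary row operations preserve linear independence of the family of rows turns into a counterexample to the statement when $J\subsetneq I$. Take $I=\mathbb{Z}^+$, $J=\{2,3,4,\dots\}$, and $a_{i,i+1}=1$ with all other entries $0$. This matrix is tight, since $(A\vec{x})_i=x_{i+1}$ forces $\vec{x}=\vec{0}$; but its rows $e_2,e_3,\dots$ form a linearly independent family, and a swap permutes the family while a replacement $R_i\mapsto\sum_k\mu_kR_k$ with $\mu_i\neq 0$ is invertible on it, so after any finite number of elementary operations every row is still nonzero. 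Hence no $B^{(1)}$ can have the zero first row that every proudly diagonal $B$ must have (as $1\notin J$), and $A$ is not proudly row-diagonalizable. The paper's own proof of the ``only if'' direction is silent on exactly this point: its induction only ever performs the pivot step (producing $e_{j+1}$ in position $j+1$) and never manufactures the zero rows required at indices of $I\setminus J$, so it is really an argument for the case $I=J$, where both your construction and the paper's go through and the theorem is correct. In short: your forward direction stands in full generality, your converse is complete precisely when $I=J$, and the stage you could not handle is a defect of the statement as written rather than a missing idea on your part.
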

\begin{proof}[Proof of the ``if" part]
Suppose that
 $A$ is proudly row-diagonalizable, i.e., $A$ is reducible to a proudly diagonal matrix $B$, and that  $A$ is not tight, meaning that there exists $\vec{x} \in \R^J$ such that $A\vec{x}=\vec{0}$ and $x_j\neq 0$ for some $j \in J$.
Then by the definition of reducibility, 
using finitely many elementary row operations, $A$ can be reduced  to a matrix $B^{(j)}$ whose first $j$ rows are the same as those of the proudly row-diagonal matrix~$B$. Therefore $B^{(j)}\vec{x}=\vec{0}$ and in particular $B^{(j)}_j\vec{x}=\vec{0}$. Since $B^{(j)}_{j,j}$ is the only non-zero entry in $B^{(j)}_j$,  $B^{(j)}_j\vec{x}=\vec{0}$ means $B^{(j)}_{j,j}x_j=0$, which implies $x_j=0$, a contradiction. 
\end{proof}
We defer the proof of the ``only if" part to \refS{subsubsec:theonlyifpart}.
 
\section{Proof of \refT{thm:main}}\label{sec:proof}

\subsection{The countable case}

We construct   an injective function $\phi: J \to I$ satisfying  $a_{\phi(j), j}\neq 0$ for all $j\in J$. The values $\phi(j)$ are defined inductively.

We start with $j=1$. By~\refL{lemma:stubborn} the stubbornness of~$x_1$ implies 
\[e_1=\sum_{i \in I_1}\lambda^{(0)}_iA_i\] for a finite set $I_1 \subseteq I$.
Let $i_1 \in I_1$ satisfy  $\lambda^{(0)}_{i_1}\neq 0$. Define 
 $\phi(1)=i_1$. 
By permuting rows we may (and do) assume $i_1=1$.

Let $A^{(0)}=A$ and let $A^{(1)}=(a^{(1)}_{m,n})$ be $A$ with
$A_1$ replaced by $e_1$. The fact  $\lambda^{(0)}_{1}\neq 0$ means that $A_1$ is a linear combination of $e_1$ and other rows in~$I_1$,
hence 
\begin{equation}\label{eq:equivalent}
    span(Row(A^{(1)}))= span(Row(A^{(0)}))=span(Row(A)).
\end{equation}

Next  consider $j=2$. Since~$x_2$ is stubborn, by~\refL{lemma:stubborn} and~\eqref{eq:equivalent}, we have $e_2 \in span(Row(A))=span(Row(A^{(1)}))$ so that 
\[   e_2 = \sum_{i\in I_2}\lambda^{(1)}_i A^{(1)}_i  \]
for a finite set $I_2\subseteq I$.
Noting that $A^{(1)}_1=e_1$, there exists $i_2\neq 1$ such that $\lambda^{(1)}_{i_2}\neq 0$ and $a^{(1)}_{i_2,2}\neq 0$. Define $\phi(2)=i_2$. Permuting rows, we may assume $i_2=2$. Note that in the original matrix~$A$, $a_{i_2,2}\neq 0$ as $A^{(1)}_\ell=A_\ell$ for all $\ell\neq 1$. So $\phi$ is injective on $\{1,2\}$ and satisfies the requirement of~\refT{thm:main}. We replace the $i_2$th row of $A^{(1)}$ by $e_2$ to obtain $A^{(2)}=(a^{(2)}_{m,n})$. We have
\[  span(Row(A^{(2)}))=span(Row(A^{(1)}))=span(Row(A)). \]
By construction, $A^{(2)}_\ell$ is $e_\ell$ for $\ell\in\{1,2\}$ and is $A_\ell$ for $\ell>2$.

Assume that we have defined an injective function $\phi$ on $\{1,\dots, j \}$ satisfying the requirement of~\refT{thm:main}, and a matrix $A^{(j)}=(a^{(j)}_{m,n})$ satisfying (i)~$A^{(j)}_\ell = e_\ell$ for all $ \ell\in\{1,\dots,j\}$ and $A^{(j)}_\ell = A_\ell$ for all $\ell>j$, and (ii)~~ 
\[ span(Row(A^{(j)}))=span(Row(A)).  \]

Next look at  $j+1$. By  by~\refL{lemma:stubborn}
the stubbornness of~$x_{j+1}$ implies  $e_{j+1}\in span(Row(A))=span(Row(A^{(j)}))$, so there exists a finite set $I_{j+1}\subseteq I$ satisfying 
\begin{equation}\label{eq:recurrsiverelation}
    e_{j+1}=\sum_{i\in I_{j+1}}\lambda^{(j)}_i A^{(j)}_i.
\end{equation}
There exists $i_{j+1}\not\in \{1,\dots, j\}$ satisfying $\lambda^{(j)}_{i_{j+1}}\neq 0$ and $a^{(j)}_{i_{j+1},j+1}\neq 0$. Setting $\phi(j+1)=i_{j+1}$ satisfies the desired property in~\refT{thm:main} since $A^{(j)}_{i_{j+1}}=A_{i_{j+1}}$. Then $a_{i_{j+1},j+1}=a^{(j)}_{i_{j+1},j+1}\neq 0$.
Permuting rows, we may assume $i_{j+1}=j+1$.
Replacing  $A^{(j)}_{i_{j+1}}$ by $e_{j+1}$ to obtain $A^{(j+1)}$, we have 
\[  span(Row(A^{(j+1)}))=span(Row(A^{j}))=span(Row(A)). \]
Furthermore, $A^{(j+1)}_\ell$ is $e_\ell$ for $\ell \in\{1,\dots, j+1\}$ and is $A_\ell$ for $\ell>j+1$.

Continuing, we obtain the desired injection $\phi$.

\subsubsection{Completing the proof of~\refT{thm:tightiffprd}}\label{subsubsec:theonlyifpart}
\begin{proof}[Proof of the ``only if" part]
    Assume $A$ is tight. The proof is similar to that of~\refT{thm:main}. With the same notation as above, assume that for some $j\ge 0$,  $A$ is reducible to a matrix $A^{(j)}$ satisfying  $A^{(j)}_{p,q}=\delta_{p}^q$ for any $p\le j$ and any $q$. For~$j+1$, we have
\begin{equation*}
    e_{j+1}=\sum_{i \in I_{j+1}} \lambda_{i}^{(j)}A^{(j)}_i.
\end{equation*}
We choose the row~$A^{(j)}_{\phi(j+1)}$, replace it by the linear combination $\sum_{i\in  I_{j+1}} \lambda_{i}^{(j)}A^{(j)}_i=e_{j+1}$ (note that $\lambda_{\phi(j+1)}^{(j)}$, the coefficient of~$A^{(j)}_{\phi(j+1)}$, is non-zero), and swap it with the $(j+1)$st row. So far,  only finitely many elementary row operations have been used. Name the resulting matrix~$A^{(j+1)}$. Then $A^{(j+1)}$ has the same first~$j$ rows as~$A^{(j)}$ (since $\phi(j+1)\not\in\{1,\dots, j\}$) and has $e_{j+1}$ as its $(j+1)$st row. Doing it inductively produces a proudly row-diagonal matrix. 
\end{proof}

\subsection {The general (possibly uncountable) case}\label{general}
The  case of uncountable $J$ is proved using  the fact that $A \in RFS(I,J)$. This enables choosing the variables $x_j$ in the proof above in such an order that there is an ordinal $\alpha \le \omega$, satisfying the following:  

All variables appearing in equations $A_i\vec{x}=0, i \in I_j$, $j<\alpha$, appear as $x_k$ for some $k<\alpha$. (Here $I_j$ is as in the proof above.)

Removing these equations and variables from the system results then in a tight system, and we can continue inductively. 

This argument will be repeated  in the simpler setting of matchings in  \refS{sec:marriage}.

\section{Marriage theorems}\label{sec:marriage}

\begin{definition}

A bipartite graph with sides $(M,W)$  is called {\em espousable} if it contains a matching covering $M$.
\end{definition}

For a matrix $A=(a_{i,j})_{i \in I, j \in J}$, let $G_A$ be the bipartite graph with sides $(J,I)$ and edges set $E$ defined by $(j,i) \in E$ if $a_{i,j} \neq 0$. \refT{thm:main} can be re-formulated as:

\begin{theorem}
    If $A$ is tight then $G_A$ is espousable.
\end{theorem}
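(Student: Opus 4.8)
The plan is to recognize that this statement is precisely \refT{thm:main} transcribed into the language of bipartite matchings, so I would first unpack the two definitions and check that they coincide. A matching in $G_A$ that covers the side $J$ assigns to each column index $j$ a row index $\phi(j)$ with $(j,\phi(j)) \in E$, and distinct columns receive distinct rows because no two edges of a matching may share the vertex $\phi(j) \in I$; thus $\phi : J \to I$ is an injection. By the edge rule of $G_A$, the condition $(j,\phi(j)) \in E$ means exactly that $a_{\phi(j),j} \neq 0$. Hence ``$G_A$ is espousable'' is word-for-word the assertion that $A$ is loaded, and the theorem follows immediately from \refT{thm:main}. On this route there is no obstacle whatsoever: the only content is the translation of definitions.

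Since the section is devoted to marriage theorems and points back to the approach of Gollin and Jo\'o, I would also sketch a self-contained marriage-theoretic argument that avoids quoting \refT{thm:main}. First I would record the structural features of $G_A$: because $A \in RFS(I,J)$ every row has finite support, so every vertex $i \in I$ has finite degree, whereas the column vertices $j \in J$ may have infinite degree. Next I would verify Hall's condition: for a finite set $S \subseteq J$, the submatrix on rows $N(S)$ and columns $S$ has as its columns exactly the full columns of $A$ indexed by $S$ (the entries outside $N(S)$ vanish). Tightness forces any finite set of columns to be linearly independent, since a finitely supported vector in the kernel is a kernel vector and hence zero; therefore this submatrix has rank $|S|$, giving $|N(S)| \ge |S|$.

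The hard part is that Hall's condition alone does not guarantee a matching covering $J$ in the infinite setting. Indeed \refE{donjuan} satisfies Hall's condition yet fails to be espousable: the columns $j \ge 2$ each force a distinct partner $\phi(j)=j-1$, exhausting all of $I$, so the single column $j=1$ is left with no available row. Consequently the marriage route must replace Hall by a genuinely infinite criterion, either the refined Nash-Williams / Wojciechowski condition exploited in \cite{gollinjoo}, or, more in the spirit of this paper, the ordinal-reduction argument flagged in \refS{general}. Concretely, I would well-order $J$ so that some initial block of columns closes up, meaning that every row appearing with a nonzero coefficient in the expansions $e_j = \sum_{i \in I_j} \lambda_i A_i$ over that block involves only columns already in the block; deleting those rows and columns leaves a system that is again tight, whose graph is again espousable, and the partial matchings can then be assembled transfinitely. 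The main obstacle, and the step I expect to demand the most care, is showing that such a closed-off block can always be chosen of controlled size and that the residual system genuinely inherits tightness, so that the transfinite induction terminates with a matching covering all of $J$.
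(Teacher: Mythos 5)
Your first paragraph is exactly the paper's proof: the theorem is stated there as a reformulation of \refT{thm:main}, and the translation you give (a matching of $G_A$ covering the side $J$ is precisely an injection $\phi:J\to I$ with $a_{\phi(j),j}\neq 0$, i.e.\ a witness that $A$ is loaded) is all that is needed. The supplementary marriage-theoretic sketch is not required for this statement; its observations are sound (finite degrees on the $I$ side, and \refE{donjuan} correctly showing that Hall's condition alone cannot suffice), but the transfinite assembly you outline remains a sketch and is in any case subsumed by the argument already carried out in \refS{general}.
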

Necessary and sufficient conditions for espousability (these are called ``marriage theorems'') are known in general graphs, but  in our context  only  criteria for countable graphs are needed. In this case, there are basically two criteria known. One is a refinement by Wojchiechowsky of a criterion proved by Nash-Williams. It is the absence of a substructure that we shall name a ``NWW-obstruction''. We shall not define it here, since it is rather involved, and not directly relevant in this paper. One can find it e.g. in \cite{gollinjoo}. In \cite{w} it was proved that this criterion applies also in  bipartite graphs having countable degrees in the ``women'' ($W$) side.

The other criterion was proved by Podewski and Steffens. It is the absence of another type of sub-structure, that we shall name a ``PS-obstruction''. 
 In \cite{a} it was proved that the two criteria are equivalent in general, namely the existence of a PS obstruction  is equivalent to the existence of an NWW-obstruction, regardless of countability. 

Here is the definition of a PS-obstruction.
 For a set $T$ of vertices let $N(T)=N_G(T)$ be the set of neighbors of $T$ in $G$. When~$v$ is a single vertex, we abbreviate $N(\{v\})$ by $N(v)$.
\begin{definition}\hfill
\begin{enumerate}[(1)]
    \item A matching $F$ is called
a {\em wave} if 
     $$N(\bigcup F \cap M)= \bigcup F \cap W.$$ 
\item A wave is called {\em critical} if for every matching $K$ of $\bigcup F \cap M$ we have $\bigcup K \cap W =\bigcup F \cap W$.
\item A pair $(F,a)$ is called an {\em impediment} if $F$ is a wave,  $a \in M \setminus \bigcup F$ and $N(a) \subseteq \bigcup F$.
\item An impediment $(F,a)$ is called a 
{\em Podewski--Steffens (PS for short) obstruction}  if $F$ is critical.  
\end{enumerate}
    
\end{definition}

Clearly, a finite  wave is critical, and a finite impediment is an obstruction. The existence of an obstruction excludes espousability, the existence of an impediment not necessarily.  
It is easy to see that, given condition (1), condition (2) is equivalent to the absence of an infinite $F$-alternating path starting with a non-$F$ edge.

A graph not containing a PS-obstruction is called 
{\em unobstructed}. Clearly,  if a graph is espousable, then it is unobstructed. Podewski and Steffens \cite{ps} proved the other direction: 
\begin{theorem}
If $M$ is countable and the graph is unobstructed then it is espousable.
\end{theorem}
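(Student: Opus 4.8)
The plan is to deploy the classical theory of waves, with countability of $M$ entering only when passing to limits of increasing chains. Write $M_F := \bigcup F \cap M$ and $W_F := \bigcup F \cap W$ for the sets of men and women covered by a matching $F$. Each $w \in W_F$ is $F$-matched into $M_F$, so $W_F \subseteq N(M_F)$ automatically, and hence the wave condition $N(M_F) = W_F$ is equivalent to the single inclusion $N(M_F) \subseteq W_F$: a wave is a matching whose covered men send no edge to an uncovered woman. The empty matching is a critical wave, so the family of critical waves is nonempty; I order it by inclusion of the covered-men set $M_F$. The proof then rests on two claims: (i) this family has a maximal element $F^\ast$, and (ii) every maximal critical wave covers all of $M$. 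Claim (ii) makes $F^\ast$ a matching covering $M$, which is exactly an espousal.

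For (i), the point is that critical waves are closed under the unions Zorn's lemma requires. Given an increasing chain of critical waves, I would take the pointwise limit matching $F_\infty$; it covers $\bigcup_n M_{F_n}$, and it is again a wave because $N(\bigcup_n M_{F_n}) = \bigcup_n N(M_{F_n}) \subseteq \bigcup_n W_{F_n} = W_{F_\infty}$. That $F_\infty$ is still critical should follow from the alternating-path characterisation recorded after the definition: any violation of criticality is witnessed by an infinite $F_\infty$-alternating path beginning with a non-matching edge, and one argues (using countability of $M$, so that the chain has a countable cofinal part) that such a path is already incompatible with criticality at some finite stage. Zorn's lemma then supplies a maximal critical wave $F^\ast$. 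This closure-under-chains step is the direct analogue, in the matching setting, of the ordinal $\alpha \le \omega$ decomposition used for the general case of \refT{thm:main}.

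For (ii), suppose toward a contradiction that some $a \in M \setminus M_{F^\ast}$ is uncovered, and look at the women reachable from $a$ along $F^\ast$-alternating paths that start with a non-$F^\ast$ edge. If no such path leaves $W_{F^\ast}$, then in particular $N(a) \subseteq W_{F^\ast} \subseteq \bigcup F^\ast$, so $(F^\ast, a)$ is an impediment; criticality of $F^\ast$ upgrades it to a Podewski--Steffens obstruction, contradicting that $G$ is unobstructed. In the remaining case some finite alternating path $P$ runs from $a$ to a woman $w \notin W_{F^\ast}$, and switching $F^\ast$ along $P$ yields a matching that covers $M_{F^\ast} \cup \{a\}$.

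I expect the real difficulty to lie in this last case: one must verify that the augmented matching is again a \emph{critical wave}, and hence strictly dominates $F^\ast$, contradicting maximality. The wave property can be destroyed by a single switch, since a newly covered man may be adjacent to a woman that the switch has just freed; re-establishing both closedness and criticality calls for choosing $P$ carefully --- routing to a woman that cannot subsequently be re-freed --- and, in general, for iterating the augmentation and taking a countable limit as in step (i). Once (i) and (ii) are established, $F^\ast$ covers and espouses $M$, completing the proof.
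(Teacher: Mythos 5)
Your step (i) is in fact fine, and simpler than you make it: if $K$ is a matching of $M_{F_\infty}=\bigcup_n M_{F_n}$, then its restriction to each $M_{F_n}$ matches $M_{F_n}$ and so, by criticality of $F_n$, covers exactly $W_{F_n}$; taking unions gives $W_K=W_{F_\infty}$, with no need for alternating paths or for countability. The fatal problem is claim (ii): a maximal critical wave need \emph{not} cover $M$, so the object Zorn's lemma hands you is not an espousal. Take $M=\{a\}$ and $W=\{w_1,w_2\}$ with $a$ adjacent to both. The only critical wave is the empty matching --- $\{(a,w_i)\}$ is not even a wave, since $N(a)=\{w_1,w_2\}\neq \{w_i\}$ --- so the maximal critical wave misses $a$, although the graph is unobstructed and espousable. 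This is not a repairable defect of your case analysis: in your second case the switched matching covers $M_{F^\ast}\cup\{a\}$ but is a wave only if $N(a)$ lies in $W_{F^\ast}$ together with the single newly covered woman, and no choice of augmenting path (nor any countable iteration) can arrange that when $a$ has two or more neighbors outside $W_{F^\ast}$. Critical waves are by design ``Hall-tight'' configurations, i.e.\ pieces of a potential obstruction; a man with spare neighbors can never belong to one, so maximal critical waves are the wrong approximations to an espousal.

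The paper's proof runs through an entirely different mechanism, namely Lemma~\ref{lemma:recurbounti} of Podewski and Steffens: if $G$ is unobstructed, then every $m\in M$ has a neighbor $w$ such that $G-m-w$ is still unobstructed. One enumerates the countable set $M$ as $m_1,m_2,\dots$ and matches the men one at a time, the lemma guaranteeing at each finite stage that unobstructedness, and hence the ability to continue, is preserved; after $\omega$ steps all of $M$ is covered. Critical waves do enter, but inside the proof of that lemma, as the structures whose creation must be avoided when choosing $w$, not as partial espousals to be grown. If you want to keep a Zorn-flavored argument, the objects in your chain should be partial matchings $F$ with $G-\bigcup F$ unobstructed rather than critical waves; but verifying closure under unions of chains for that family is essentially as hard as the recursion the paper performs, which is why the paper instead exploits countability of $M$ directly.
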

This directly follows from the following lemma.
\begin{lemma}\cite{ps}\label{lemma:recurbounti}
If $G $ is unobstructed then for every $m \in M$ there exists $w \in N(m)$ such that $G-m-w$ is unobstructed. 
\end{lemma}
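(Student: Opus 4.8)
The goal is to show that if $G$ is unobstructed, then for any fixed $m \in M$ we can find some $w \in N(m)$ for which deleting both $m$ and $w$ leaves the graph unobstructed. The natural strategy is to proceed by contradiction: suppose that for \emph{every} neighbor $w \in N(m)$, the graph $G - m - w$ \emph{does} contain a PS-obstruction $(F_w, a_w)$. I would then try to assemble these individual obstructions into a single obstruction in $G$ itself, contradicting the hypothesis that $G$ is unobstructed. The rough intuition is that $N(m)$ must be finite or the waves $F_w$ must interact coherently; in either case one builds a wave in $G$ that ``swallows'' $m$ and produces an impediment, and then one must upgrade this impediment to a genuine obstruction by verifying criticality.

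\textbf{Key steps.}
First I would set up the contradiction hypothesis and collect, for each $w\in N(m)$, a PS-obstruction $(F_w,a_w)$ in $G-m-w$, so that $F_w$ is a critical wave, $a_w \in M\setminus\bigcup F_w$ lies in the side $M$, and $N_{G-m-w}(a_w)\subseteq \bigcup F_w$. Second, I would analyze how $m$ relates to these structures: the key observation should be that since $w$ was deleted, the only neighbor of the relevant matched vertices that could ``escape'' the wave in $G$ is through $m$ or through $w$ itself. The central construction is to take a suitable union or limit of the waves $F_w$ together with a matching edge at $m$; one wants to produce a wave $F$ in $G$ with $m \notin \bigcup F$ and $N(m)\subseteq \bigcup F$, i.e.\ an impediment $(F,m)$. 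Third, and most delicately, I must show this impediment is \emph{critical}, i.e.\ that every matching of $\bigcup F \cap M$ saturates exactly $\bigcup F \cap W$; by the criterion noted just before the lemma, criticality is equivalent to the absence of an infinite $F$-alternating path starting with a non-$F$ edge, so I would argue that any such path in $G$ would, after restriction, yield one in some $G-m-w$, contradicting the criticality of the $F_w$.

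\textbf{The main obstacle.}
The hard part will be the union/limit construction and the verification of criticality together. Waves are closed under suitable unions (this is a standard feature of the Podewski--Steffens theory, though not stated explicitly in the excerpt), but here the individual $F_w$ live in \emph{different} deleted graphs $G-m-w$, so one must be careful that edges used in $F_w$ are still present in $G$ and that no two of the combined waves conflict on a shared vertex. The criticality verification is subtle because an infinite alternating path in the combined wave could in principle weave through infinitely many of the $N(m)$-neighbors $w$, so reducing it to a single $G-m-w$ requires showing the path eventually stabilizes in one piece of the construction --- this is exactly where the countability of $M$ (and hence a well-ordering or exhaustive enumeration argument) is likely to enter. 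I anticipate that the cleanest route is to avoid handling all $w$ simultaneously and instead use a maximality argument: take a maximal wave $F$ among those avoiding $m$, show that maximality forces $N(m)\subseteq\bigcup F$ (else $m$ could be added, extending $F$), and then derive criticality from the maximality, so that $(F,m)$ is the sought obstruction in $G$ --- contradicting unobstructedness and thereby establishing the lemma.
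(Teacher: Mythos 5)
The paper does not prove this lemma: it is quoted from Podewski and Steffens \cite{ps}, so there is no internal proof to compare yours against. Judged on its own terms, your text is a plan rather than a proof, and both routes it sketches have genuine gaps at exactly the points you yourself flag as delicate. In the contradiction route, the obstructions $(F_w,a_w)$ you collect live in the graphs $G-m-w$; in $G$ itself $F_w$ is typically \emph{not} a wave, because $w$ may be a neighbour of some vertex of $\bigcup F_w\cap M$ while $w\notin\bigcup F_w\cap W$, so the defining equality $N(\bigcup F_w\cap M)=\bigcup F_w\cap W$ already fails for a single $F_w$ before any union is attempted. Moreover, waves are not closed under arbitrary unions in general (two vertices of $M$ sharing a single common neighbour give two waves whose combined $M$-side is not even matchable), so ``take a suitable union or limit'' is essentially the entire content of the lemma, not one step of its proof.

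The maximality shortcut at the end does not repair this. A maximal wave $F$ avoiding $m$ does exist by Zorn's lemma (unions of chains of waves are waves), but maximality does not force $N(m)\subseteq\bigcup F$: to adjoin the single edge $mw$ and remain a wave you would need \emph{all} of $N(m)$ to lie in $(\bigcup F\cap W)\cup\{w\}$, not just the one neighbour $w$; and in any case the extended matching no longer avoids $m$, so it would not contradict the maximality you imposed. Finally, ``derive criticality from the maximality'' is asserted, not argued; a maximal wave need not be critical, and the paper's own closing example shows that an impediment without criticality proves nothing, so this is precisely the step that cannot be waved through. The actual argument of \cite{ps} goes through the structure theory of critical sets (in an unobstructed graph the union of critical sets is critical, one forms the maximal critical set, and then chooses $w$ differently according to whether $m$ lies inside or outside it); none of that machinery appears in your outline, so the proposal as it stands does not constitute a proof.
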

In fact, the lemma entails the stronger: 
\begin{theorem}
If $N(w)$ is countable for every $w \in W$ and the graph is unobstructed, then it is espousable.
\end{theorem}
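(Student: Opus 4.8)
The plan is to deduce this strengthened espousability theorem from Lemma~\ref{lemma:recurbounti} by a transfinite exhaustion argument, in the same spirit as the proof of the uncountable case of \refT{thm:main} sketched in \refS{general}. The key difficulty compared to the Podewski--Steffens theorem is that $M$ is now allowed to be uncountable, so we cannot simply enumerate $M$ as $m_1, m_2, \ldots$ and apply the lemma once per step along a single $\omega$-sequence. Instead, each application of Lemma~\ref{lemma:recurbounti} removes one vertex $m$ from $M$ together with some matched $w \in N(m)$, and we must iterate this transfinitely while ensuring that the ``unobstructed'' property is preserved through limit stages and that every vertex of $M$ is eventually covered.

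First I would set up the transfinite recursion. Well-order $M$ as $\{m_\xi : \xi < \kappa\}$ where $\kappa = |M|$. I would build a decreasing chain of induced subgraphs $G = G_0 \supseteq G_1 \supseteq \cdots$ indexed by ordinals, together with a growing matching, as follows. At a successor stage $\eta + 1$, if $m_{\xi}$ (the least still-unmatched vertex of $M$) is present in $G_\eta$, apply Lemma~\ref{lemma:recurbounti} to $G_\eta$ and $m_\xi$ to obtain a partner $w \in N_{G_\eta}(m_\xi)$ with $G_\eta - m_\xi - w$ unobstructed; set $G_{\eta+1} = G_\eta - m_\xi - w$ and add the edge $m_\xi w$ to the matching. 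At a limit stage $\lambda$, set $G_\lambda = \bigcap_{\eta < \lambda} G_\eta$, i.e.\ delete from $G$ all vertices removed at earlier stages, and let the matching be the union of the edges chosen so far. The countable-degree hypothesis on the $W$-side enters precisely here: it is what guarantees, in the original Podewski--Steffens machinery, that the lemma is applicable in each $G_\eta$, and it is the analogue of the finite-support condition $A \in RFS(I,J)$ that made the $\alpha \le \omega$ decomposition possible in \refS{general}.

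The main obstacle is verifying that unobstructedness survives the limit stages. At a successor step the lemma hands us exactly what we need, but there is no a priori reason that an intersection of unobstructed graphs should be unobstructed: a PS-obstruction in $G_\lambda$ is a wave $F$ and a vertex $a$ with $N(a) \subseteq \bigcup F$ and $F$ critical, and such a global configuration could in principle be assembled from pieces each of which was harmless in the finite-stage graphs. The way I would resolve this is to keep a stronger invariant through the recursion, tailored to the countable-$N(w)$ setting: because each $w \in W$ has only countably many neighbors, any wave or alternating path witnessing an obstruction in $G_\lambda$ is supported on a countable set, hence already ``stabilizes'' at some stage $\eta < \lambda$; one then traces the obstruction back to a $G_\eta$, contradicting the unobstructedness guaranteed there. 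Concretely, I expect to use the characterization noted just before the definition of ``unobstructed'' --- that criticality (condition (2)) is equivalent to the absence of an infinite alternating path starting with a non-matching edge --- to argue that any putative limit-stage obstruction yields such an alternating path living inside a single earlier $G_\eta$.

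Finally, the recursion must terminate with every vertex of $M$ matched. Since at each relevant successor stage we cover the least unmatched $m_\xi$, and at limit stages we only pass to intersections, a vertex $m_\xi$ can fail to be covered only if it was itself deleted as a partner $w$ of some earlier $m_{\xi'}$ --- but partners are drawn from $W$, not from $M$, so $m_\xi$ is never deleted except when it is matched. Hence the induction runs through all $\xi < \kappa$ and produces a matching covering $M$, which is exactly espousability. The same bookkeeping, I would note, specializes to give \refT{thm:main} via the graph $G_A$: the countable-degree condition on $W = I$ is implied by $A \in RFS(I,J)$ only after transposing roles, so in the matrix application it is the rows having finite support that supplies the needed countability, echoing the remark in \refS{general} that this argument is ``repeated in the simpler setting of matchings.''
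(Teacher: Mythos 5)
Your overall framework---iterate Lemma~\ref{lemma:recurbounti} transfinitely and verify that unobstructedness survives---is the right one, and you correctly single out the limit stages as the crux. But the resolution you propose there does not work, and this is a genuine gap. First, the assertion that the countable-degree hypothesis is ``what guarantees that the lemma is applicable in each $G_\eta$'' is a misattribution: Lemma~\ref{lemma:recurbounti} carries no countability hypothesis at all. Second, the claim that ``any wave or alternating path witnessing an obstruction in $G_\lambda$ is supported on a countable set, hence already stabilizes at some stage $\eta<\lambda$'' fails on both counts: a critical wave can be uncountable (nothing in the definition bounds $|F|$), and, more fundamentally, a PS-obstruction $(F,a)$ in the limit graph $G_\lambda=\bigcap_{\eta<\lambda}G_\eta$ need not correspond to an obstruction in any $G_\eta$, because the condition $N(a)\subseteq\bigcup F$ is checked in $G_\lambda$, where $a$ may have lost neighbours cofinally in $\lambda$: at every stage $\eta<\lambda$ the vertex $a$ can still have a neighbour outside $\bigcup F$, so $(F,a)$ is not even an impediment in $G_\eta$, yet it becomes an obstruction at the limit. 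Since the lemma itself is countability-free, any correct proof must use the countability of the sets $N(w)$ in a way that genuinely blocks this limit phenomenon; as written, yours does not.

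The paper closes exactly this gap by \emph{not} fixing a well-order of $M$ in advance. The vertices $m_i$ are chosen dynamically (by dovetailing) so that after at most $\omega$ steps the set $D_M$ of matched $M$-vertices contains $\bigcup_i N(w_i)$ for every matched $w_i$; this is where countability of each $N(w_i)$ enters, since the required closure is a countable union of countable sets and so can be exhausted in $\omega$ steps. With this closure property every remaining $m\in M\setminus D_M$ satisfies $N_G(m)=N_H(m)$ in the remainder $H=G-D_M-D_W$, so any PS-obstruction in $H$ is verbatim a PS-obstruction in $G$: unobstructedness of $H$ follows directly from that of the original graph $G$, with no need to trace anything through the intermediate stages. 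One then restarts on $H$ and recurses transfinitely, the higher limit stages being harmless for the same reason. To repair your argument, reorganize the recursion into such neighbourhood-closed countable blocks rather than processing $M$ in a prescribed well-order.
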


\begin{proof}
The proof mimics that in \refS{general}. Take $m_1 \in M$. By ~\refL{lemma:recurbounti} there exists $w_1 \in N(m_1)$ for which   $G_1:=G-m_1-w_1$ is unobstructed, and define $\phi(m_1)=w_1$.  Next apply the lemma to $G_1$, with $m_2\in N(w_1)$,

Let $X_1=N(w_1)$. If $X_1 \neq \{m_1\}$, choose $m_2\neq m_1 \in X_1$, and choose $w_2 \in N_{G_1}(m_2)$ for which $G_2:=G_1-m_2-w_2$ is unobstructed.  

Continuing this way to choose $m_i$ and $w_i$. For some $\alpha \le \omega$ we shall have 
 $\{m_j \mid j \le \alpha\} =\bigcup_{i \le \alpha} N(w_i)$, namely all $N(w_i)$ have been represented. The remaining elements of $M$ are connected only to elements in $W\setminus \{w_i:i\le \alpha\}$, and since the remaining graph is unobstructed, we can start the procedure anew. 
\end{proof}

Theorem \ref{thm:main} implies that if $A$ is tight then $G_A$ does not contain a PS-obstruction. In \cite{gollinjoo} it was proved directly that if $A$ is tight then there is no NWW obstruction, which by the result of \cite{a} implies that it does not contain a PS-obstruction. Since PS-obstructions are simpler the following may be of interest: 
\begin{challenge}
Prove directly that if $G_A$ contains a PS-obstruction then $A$ is not tight.
\end{challenge}

The following example shows that the existence of an impediment does not suffice for this purpose, which means that 
 the condition of ``no infinite 
alternating path in the impediment'' must be invoked.

\begin{example}
Consider the system whose $(2k-1)$st equation is $x_{2k-1}+x_{2k}+x_{2k+1}=0$ and the $2k$th equation is $x_{2k}+x_{2k+1}=0$ for every $k=1,2,\ldots$ Note that $x_{j+1}$ shows up in the $j$th equation for each $j=1,2,...,$ therefore together with $x_1$ they form an impediment. But the system is tight, as the $(2k-1)$ and $2k$th equations imply that all the odd-numbered variables are zero. Then the even-numbered equations imply that all the even-numbered variables are zero. 
\end{example}

\bigskip{\noindent\bf Acknowledgements.} 
We are grateful to Dani Kotlar for a useful comment.

 \small

\normalsize

 \end{document}